\newcommand{\Tr}{\mathrm{Tr}}
\newcommand{\fqn}{\mathbb{F}_{q^n}}
\newcommand{\F}{\mathbb{F}}
\newcommand{\fq}{\mathbb{F}_q}
\newcommand{\li}{\mathcal L}
\newcommand{\GF}[1]{\mathbb{F}_{#1}}
\newcommand{\order}[1]{\left|#1\right|}
\newtheorem{theorem}{Theorem}[section]
\newtheorem{lemma}[theorem]{Lemma}
\newtheorem{corollary}[theorem]{Corollary}
\newtheorem{proposition}[theorem]{Proposition}
\theoremstyle{definition}
\newtheorem{definition}[theorem]{Definition}
\theoremstyle{remark}
\newtheorem{remark}[theorem]{Remark}
\author[L. Reis]{Lucas Reis}
\address{Departamento de Matem\'{a}tica, Universidade Federal de Minas Gerais, Belo Horizonte, Minas Gerais, 31270-901, Brazil}
\email{lucasreismat@gmail.com}
\author[Q. Wang]{Qiang Wang}
\address{School of Mathematics and Statistics, Carleton University, Ottawa, Ontario, K1S 5B6, Canada}
\thanks{The research of the author Qiang Wang is partially supported by NSERC of Canada (RGPIN- 2017-06410) and the research of the author Lucas Reis is partially supported by PRPq/UFMG (ADRC 09/2019).}
\email{wang@math.carleton.ca}
\keywords{finite fields,  permutation polynomials, linearized polynomials, complete mappings, AGW criterion}
\subjclass[2010]{Primary 11T06, Secondary 12E20}
\begin{document}

\title{Permutation polynomials from a linearized decomposition} 

\begin{abstract}
In this paper we discuss the permutational property of polynomials of the form $f(L(x))+k(L(x))\cdot M(x)\in \F_{q^n}[x]$ over the finite field $\F_{q^n}$, where $L, M\in \F_q[x]$ are $q$-linearized polynomials. The restriction $L, M\in \F_q[x]$ implies a nice correspondence between the pair $(L, M)$ and the pair $(g, h)$ of conventional $q$-associates over $\F_q$ of degree at most $n-1$. In particular, by using the AGW criterion, permutational properties of our class of polynomials translates to some arithmetic properties of polynomials over $\F_q$, like coprimality.  This relates the problem of constructing PPs of $\fqn$ to the problem of  factorizing  $x^n-1$ in $\fq[x]$.  We then specialize to the case where $L(x)$ is the trace polynomial from $\F_{q^n}$ over $\F_q$, providing results on the construction of permutation and complete permutation polynomials, and their inverses. We further demonstrate that the latter can be extended to more general linearized polynomials of degree $q^{n-1}$.
\end{abstract}

\maketitle
\section{Introduction}


Let $q = p^k$ be a power of a prime number $p$, let $\GF{q}$ be a finite field with $q$ elements, and let $\GF{q}[x]$ be the ring of polynomials over $\GF{q}$. We call $f(x) \in \GF{q}[x]$ a \emph{permutation polynomial} (PP) over $\GF{q}$ if its associated polynomial mapping $f:c\mapsto f(c)$ from $\GF{q}$ to itself is a bijection. It is well known that every permutation of $\GF{q}$ can be expressed as a permutation polynomial over $\GF{q}$, of degree at most $q-1$.

Permutation polynomials over finite fields have been a hot topic of study for many years, partially due to their applications in coding theory \cite{TIT:DingH13,FFA:Laigle07}, cryptography\cite{ CRYPTO:LidlM83, CACM:RivestSA78, EL:SchwenkH98}, combinatorial designs \cite{JCT:DingY06}, and other areas of mathematics and engineering.  More background material and information about properties, constructions, and applications of permutation polynomials may be found in \cite[Chapter 7]{FFields:LidlN97} and \cite[Chapter 8]{HandbookFFields:MullenP13}. For a detailed survey of open questions and recent results we refer the reader to  \cite{FFA:Hou15} and \cite{IndexSurvey}.

Recently, Akbary, Ghioca and Wang have derived the following useful criterion to study permutation functions on finite sets. It first appears in \cite{AGW} and  is further developed in \cite{DCC:LiQW17, FFA:YuanD11, FFA:YuanD14, DCC:ZhengYP16},  among others.

\begin{lemma}[The AGW Criterion]\label{lemma:AGWCriterion}
Let $A$, $S$ and $\bar{S}$ be finite sets with $\order{S} = \order{\bar{S}}$, and let $f:A\rightarrow A$, $\bar{f}:S\rightarrow \bar{S}$, $\lambda:A\rightarrow S$, and $\bar{\lambda}:A\rightarrow \bar{S}$  be maps such that $\bar{\lambda} \circ f=\bar{f} \circ\lambda$ (see the following commutative diagram).
\[
\xymatrix{
A \ar[r]^{f}\ar[d]^{\lambda} & A \ar[d]^{\bar{\lambda}}\\
S \ar[r]^{\bar{f}} & \bar{S} }
\]

If both $\lambda$ and $\bar{\lambda}$ are surjective, then the following statements are equivalent:
\begin{itemize}
\item $f$ is a bijection from $A$ to $A$ (a permutation over $A$);
\item $\bar{f}$ is a bijection from $S$ to $\bar{S}$ and $f$ is injective on $\lambda^{-1}(s)$ for each $s\in S$.
\end{itemize}
\end{lemma}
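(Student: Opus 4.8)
The plan is to establish the two implications separately, relying on the partition of $A$ into the fibers $\lambda^{-1}(s)$ induced by $\lambda$, and on the elementary fact that a self-map of a finite set, or more generally a map between two finite sets of equal cardinality, is bijective as soon as it is injective or surjective. Throughout I would use the defining relation $\bar{\lambda}\circ f = \bar{f}\circ \lambda$ to transport injectivity and surjectivity information between the top and bottom rows of the diagram.

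For the forward direction, suppose $f$ is a bijection of $A$. That $f$ is injective on each fiber $\lambda^{-1}(s)$ is immediate, since $f$ is injective on all of $A$. To see that $\bar{f}$ is a bijection, I would argue via surjectivity: since $\bar{\lambda}$ is surjective and $f$ is a bijection, the composite $\bar{\lambda}\circ f$ is a surjection onto $\bar{S}$; by the commutative relation this composite equals $\bar{f}\circ\lambda$, whose image is contained in the image of $\bar{f}$, forcing $\bar{f}$ to be surjective. Since $|S|=|\bar{S}|$ and both sets are finite, a surjection $\bar{f}\colon S\to\bar{S}$ is automatically a bijection.

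For the reverse direction, suppose $\bar{f}$ is a bijection and $f$ is injective on each fiber. As $A$ is finite, it suffices to prove $f$ is injective. I would take $a_1,a_2\in A$ with $f(a_1)=f(a_2)$ and apply $\bar{\lambda}$; by commutativity this yields $\bar{f}(\lambda(a_1))=\bar{f}(\lambda(a_2))$, and injectivity of $\bar{f}$ gives $\lambda(a_1)=\lambda(a_2)=:s$. Hence $a_1$ and $a_2$ lie in the common fiber $\lambda^{-1}(s)$, on which $f$ is injective by hypothesis, so $a_1=a_2$.

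The arguments are short, and the one place where the full strength of the hypotheses is needed, and hence the main point to handle with care, is the conclusion in the forward direction that $\bar{f}$ is a bijection: here the cardinality condition $|S|=|\bar{S}|$ is essential, since surjectivity of $\bar{f}$ alone does not yield bijectivity without it. I would make sure each step uses only the stated surjectivity of $\lambda$ and $\bar{\lambda}$, which guarantees both that the fibers cover $A$ and that $\bar{\lambda}\circ f$ maps onto all of $\bar{S}$, and not any unstated regularity of $f$ or $\bar{f}$.
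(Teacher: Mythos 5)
Your proof is correct. Note that the paper itself gives no proof of this lemma---it is quoted directly from Akbary--Ghioca--Wang \cite{AGW}---and your argument is essentially the standard one from that source: transporting injectivity of $\bar{f}$ back through the commutative square to reduce global injectivity of $f$ to fiber-wise injectivity, and using surjectivity of $\bar{\lambda}\circ f$ together with $\order{S}=\order{\bar{S}}$ to get bijectivity of $\bar{f}$. (Your argument does not actually invoke the surjectivity of $\lambda$, which is harmless here since the fibers of $\lambda$ partition $A$ in any case.)
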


The importance of the AGW criterion depends on that it can be used not only to explain some previous constructions of PPs, but also to construct numerous new classes. For example,  Akbary, Ghoica and Wang  \cite{AGW}  applied their approach into different cases (e.g., multiplicative group case, elliptic curve case, additive group case) and obtained many interesting results.
In the additive group case,
for any polynomial  $g \in  \fqn[x]$, any additive polynomials $\varphi, \psi,\bar{\psi}\in\fqn[x]$ satisfying $\varphi \circ \psi = \bar{\psi} \circ \varphi$ and $\#\psi(\fqn)=\#\bar{\psi}(\fqn)$, and any polynomial $h \in \fqn[x]$ such that $h(\psi(\F_{q^n}))\subseteq \F_q^*$,   the permutation polynomials of the form $f(x) := h(\psi(x)) \varphi (x) + g(\psi(x))$ over $\F_{q^n}$ were characterized. One of two necessary and sufficient conditions requires $\ker(\varphi) \cap \ker(\psi) = \{0\}$, equivalently, $\varphi$ induces a bijection betwenen $\ker(\psi)$ and $\ker(\bar{\psi})$.  Later on, Yuan and Ding \cite{FFA:YuanD11}  extended their study to  PPs with the  form  $f(x)=g(B(x))+\sum_{i=1}^{r}\left(L_i(x)+\delta_i\right)h_i(B(x))$ over $\fqn$, where $B(x), L_1(x), \ldots, L_r(x) \in \fq[x]$ are $q$-polynomials,  $g(x) \in \fqn[x]$, $h_1(x), \ldots, h_r(x) \in \fq[x]$, and $\delta_1, \ldots, \delta_r \in \fqn$ such that $B(\delta_i) \in \fq$ and $h_i(B(\fqn))\subseteq \fq$.  In this case, the condition $\ker(B) \cap \ker(\sum_{i=1}^r L_i h_i(y)) = \{0\}$ for each $y \in B(\fqn)$ reduced to  $\gcd(\sum_{i=1}^r l_i(x) h_i(y), b(x)) =1$ for any $y \in \fq$, where $l_i(x)$ and $b(x)$ are conventional $q$-associate of $L_i(x)$ and $B(x)$. 
 Several interesting classes of PPs of the form $L(x) +  g(x^q -x + \delta) \in \mathbb{F}_{q^n}[x]$, where $L(x) $ is a linearized polynomial and  $g(x)^q = g(x)$ were also given in \cite{FFA:YuanD14}.  
Further generic applications of AGW criterion over $\F_{q^2}$ can be found in Zheng, Yuan and Pei \cite{DCC:ZhengYP16} 
and   Li, Qu and Wang \cite{DCC:LiQW17}.   





In this paper we focus on the subclass of permutation polynomials of $\F_{q^n}$ with the form \begin{equation}\label{eq:main}P(x)=f(L(x))+k(L(x))\cdot M(x),\end{equation} where $f\in \F_{q^n}[x]$, and $L, M\in \fqn[x]$ are $q$-linearized polynomials with coefficients in $\fq$.  We further relate the problem of constructing PPs of $\fqn$ to the problem of  factorizing  $x^n-1$ in $\fq[x]$.  Using explicit factors of $x^n-1$ in $\fq[x]$,  we can construct PPs of $\fqn$  by taking their linearized $q$-associates for $L(x)$.   In the special case that  $L(x)=\Tr_{q^n/q}(x):=x^{q^{n-1}}+\cdots+x$ is the trace polynomial, we provide more explicit results concerning the construction of permutations and their inverses, and also the construction of complete permutation polynomials (i.e., complete mappings). Most notably, Theorem~\ref{thm:const} provides a general method to produce permutations of $\F_{q^n}$ from permutations of $\F_q$, by simply solving a system of equations of the form $\Tr_{q^n/q}(x_i)=y_i$. In fact, the same method can be applied to construct complete mappings of $\F_{q^n}$ from complete mappings of $\F_q$. We further show that the results on the trace case are extended to polynomials $L(x)=\sum_{i=0}^{n-1}a^ix^{q^{n-1-i}}$ with $a\in \F_q$ and $a^n=1$.

It is worthy of mention that past works have considered permutation polynomials like in Eq.~\eqref{eq:main} with $L(x)$ being the trace map~\cite{LWZ, TZJ, WY, ZTT}. In some cases, the polynomial $M$ can even be replaced by a more general one, but the permutation criteria become less explicit.
Nevertheless, the polynomial $f$ is rather too restricted: in particular, $f$ has at most $3$ nonzero coefficients. 

The paper is organized as follows. In Section 2 we provide background material and present some initial results, including a criterion for when a polynomial given by Eq.~\eqref{eq:main} permutes $\F_{q^n}$. In Section 3 we specialize our study to permutations arising from the trace function and discuss the construction of permutation and complete permutation polynomials, and their inverses. Finally, in Section 4, we provide concluding remarks and propose directions for future research.

\section{Some preliminary results}
Throughout this paper, $\F_q$ denotes the finite field with $q$ elements and $\overline{\F}_q$ denotes its algebraic closure. A $q$-linearized polynomial is a polynomial of the form $\sum_{i=0}^ma_ix^{q^i}$, where $a_i\in \overline{\F}_q$. From the well-known identity $(a+b)^q=a^q+b^q$, we observe that $q$-linearized polynomials with coefficients in $\F_{q^t}$ induce $\F_q$-linear maps over every finite extension of $\F_{q^t}$. 

\subsection{Background material}
\begin{definition}
For $f\in \F_q[x]$ with $f(x)=\sum_{i=0}^ma_ix^i$, the linearized $q$-associate of $f$ is the polynomial $L_f(x)=\sum_{i=0}^ma_ix^{q^i}$.
\end{definition}
The following lemma provides some properties of the $q$-associates through basic operations. Its proof follows by direct verification so we omit details.

\begin{lemma}\label{lem:q-associate}
For $f, g\in \F_q[x]$, we have that $L_{f+g}(x)=L_f(x)+L_g(x)$, $L_f(L_g(x))=L_{fg}(x)$ and $\gcd(L_f(x), L_g(x))=L_{\gcd(f, g)}(x)$. 
\end{lemma}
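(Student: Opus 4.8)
The plan is to treat the three identities in order of increasing difficulty, since the later ones build on the earlier. The additivity $L_{f+g}=L_f+L_g$ is immediate: writing $f=\sum_i a_ix^i$ and $g=\sum_i b_ix^i$, the coefficient map $x^i\mapsto x^{q^i}$ sends $f+g=\sum_i(a_i+b_i)x^i$ to $\sum_i(a_i+b_i)x^{q^i}=L_f+L_g$, so nothing beyond linearity of this correspondence is needed. For the composition identity $L_f(L_g(x))=L_{fg}(x)$ I would expand directly: with $f=\sum_i a_ix^i$ and $g=\sum_j b_jx^j$ one has $L_f(L_g(x))=\sum_i a_i\bigl(L_g(x)\bigr)^{q^i}=\sum_i a_i\bigl(\sum_j b_jx^{q^j}\bigr)^{q^i}$. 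Applying the Frobenius to the inner sum gives $\bigl(\sum_j b_jx^{q^j}\bigr)^{q^i}=\sum_j b_j^{q^i}x^{q^{i+j}}$, and here the hypothesis $b_j\in\fq$ is essential: it forces $b_j^{q^i}=b_j$, so the exponents on the coefficients vanish and we are left with $\sum_{i,j}a_ib_jx^{q^{i+j}}$. Reindexing by $k=i+j$, this is exactly $\sum_k\bigl(\sum_{i+j=k}a_ib_j\bigr)x^{q^k}=L_{fg}(x)$, since $\sum_{i+j=k}a_ib_j$ is the $k$-th coefficient of the ordinary product $fg$.

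For the gcd identity I would reduce everything to a single divisibility fact and then invoke B\'ezout. The key observation is that for any $h\in\fq[x]$ and any polynomial $D$ with $D\mid L_h$ in $\fq[x]$, one also has $D\mid L_a(L_h(x))$ for every $a\in\fq[x]$: writing $a=\sum_i\alpha_ix^i$ gives $L_a(L_h(x))=\sum_i\alpha_i\bigl(L_h(x)\bigr)^{q^i}$, and since each exponent satisfies $q^i\ge1$, every term $\bigl(L_h(x)\bigr)^{q^i}$ is divisible by $L_h(x)$, hence by $D$. In particular, taking $D=L_h$ and using commutativity of symbolic multiplication (which follows from the composition identity, as $L_{hf_1}=L_{f_1}(L_h(x))$), the relation $h\mid f$ forces $L_h\mid L_f$. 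Now set $d=\gcd(f,g)$. From $d\mid f$ and $d\mid g$ we obtain $L_d\mid L_f$ and $L_d\mid L_g$, whence $L_d\mid\gcd(L_f,L_g)$. Conversely, choosing $a,b\in\fq[x]$ with $af+bg=d$, the additivity and composition identities give $L_d=L_a(L_f(x))+L_b(L_g(x))$; applying the divisibility fact above with $D=\gcd(L_f,L_g)$ shows $D$ divides both summands, hence $D\mid L_d$. The two divisibilities, together with the monic normalization (the $q$-associate preserves leading coefficients, so $L_d$ is monic when $d$ is), yield $\gcd(L_f,L_g)=L_d$.

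The genuinely substantive step, and the one I expect to require the most care to phrase cleanly, is the divisibility lemma $D\mid L_h\Rightarrow D\mid L_a(L_h(x))$, since it is what transfers \emph{ordinary} polynomial divisibility through the \emph{symbolic} (compositional) structure in both directions. Its proof is nonetheless short: in $L_a(L_h(x))=\sum_i\alpha_i\bigl(L_h(x)\bigr)^{q^i}$ every exponent satisfies $q^i\ge1$, so each summand is a power of $L_h(x)$ of exponent at least one and is therefore divisible by $L_h(x)$, hence by $D$. Granting this, the remaining work is routine bookkeeping with B\'ezout's identity and the two identities established first.
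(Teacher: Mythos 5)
Your proof is correct: the expansions for additivity and composition are exactly the direct verifications the paper alludes to, and your B\'ezout argument combined with the divisibility observation $D\mid L_h\Rightarrow D\mid L_a(L_h(x))$ is the standard way to establish the gcd identity (as in Lidl--Niederreiter). The paper omits the proof entirely, stating only that it ``follows by direct verification,'' so there is no divergence to report.
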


We obtain the following corollary.

\begin{corollary}\label{cor:roots}
For a positive integer $n$ and a nonzero polynomial $f\in \F_q[x]$, the equation $L_f(y)=0$ has $q^{\deg(F)}$ solutions over $\F_{q^n}$, where $F(x)=\gcd(f(x), x^n-1)$. Moreover, such solutions comprise an $\F_q$-vector space and, for a monic divisor $g\in \F_q[x]$ of $x^n-1$, the image set $L_g(\F_{q^n})$ equals the set of roots of $L_G(x)$, where $G(x)=\frac{x^n-1}{g(x)}$.
\end{corollary}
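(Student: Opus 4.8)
The plan is to prove the three assertions in turn, exploiting that $L_f$ induces an $\F_q$-linear endomorphism of $\fqn$ and that the relation $L_{fg}=L_f\circ L_g$ from Lemma~\ref{lem:q-associate} converts products of ordinary polynomials into composition of their linearized associates. The second assertion is immediate: since $f\in\F_q[x]$, the polynomial $L_f$ has coefficients in $\F_q$ and hence induces an $\F_q$-linear map on $\fqn$, so its kernel $\{y\in\fqn : L_f(y)=0\}$ is automatically an $\F_q$-vector space.

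For the count, the key observation is that an element $y\in\overline{\F}_q$ lies in $\fqn$ precisely when $y^{q^n}=y$, that is, when $L_{x^n-1}(y)=0$, because $L_{x^n-1}(x)=x^{q^n}-x$. Therefore the roots of $L_f$ in $\fqn$ are exactly the common roots of $L_f$ and $L_{x^n-1}$, i.e. the roots of $\gcd(L_f,L_{x^n-1})$. By Lemma~\ref{lem:q-associate} this gcd equals $L_{\gcd(f,x^n-1)}=L_F$. I would then argue that $L_F$ is separable: its formal derivative is the constant term $F(0)$ of $F$, and since $F\mid x^n-1$ while $x\nmid x^n-1$ we have $F(0)\neq 0$. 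Hence $L_F$ has exactly $\deg L_F=q^{\deg F}$ distinct roots in $\overline{\F}_q$; moreover all of them lie in $\fqn$ because they are roots of $L_{x^n-1}=x^{q^n}-x$. This yields exactly $q^{\deg F}$ solutions in $\fqn$.

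For the last assertion, set $G=(x^n-1)/g$ so that $gG=x^n-1$. First I would show the inclusion $L_g(\fqn)\subseteq\{y : L_G(y)=0\}$: for any $z\in\fqn$, Lemma~\ref{lem:q-associate} gives $L_G(L_g(z))=L_{Gg}(z)=L_{x^n-1}(z)=z^{q^n}-z=0$. It then remains to match cardinalities. Applying the first part with $f=g$ (so $\gcd(g,x^n-1)=g$) shows $\ker L_g$ has $\F_q$-dimension $\deg g$, whence by rank--nullity $\dim_{\F_q}L_g(\fqn)=n-\deg g=\deg G$ and $|L_g(\fqn)|=q^{\deg G}$. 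Applying the first part with $f=G$ (so $\gcd(G,x^n-1)=G$) shows $L_G$ has exactly $q^{\deg G}$ roots in $\fqn$. Since $L_g(\fqn)$ is a subset of the root set of $L_G$ of the same finite cardinality $q^{\deg G}$, the two sets coincide.

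The one point requiring care — the main obstacle, such as it is — is the separability of $L_F$ together with the claim that its roots all lie in $\fqn$; both rest on the elementary but essential facts that $x\nmid x^n-1$ (so $F(0)\neq 0$) and that $L_F\mid L_{x^n-1}=x^{q^n}-x$, which I would extract cleanly from the divisibility $F\mid x^n-1$ via the composition identity $L_{x^n-1}=L_{(x^n-1)/F}\circ L_F$. Note that $x^n-1$ itself need not be separable when $p\mid n$, so it is crucial that the derivative computation for $L_F$ depends only on the nonvanishing of the constant term $F(0)$ and not on separability of $x^n-1$.
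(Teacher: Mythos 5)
Your proof is correct and follows essentially the same route as the paper: both identify the solution set of $L_f(y)=0$ in $\F_{q^n}$ with the root set of $L_F=\gcd(L_f(x),x^{q^n}-x)$ via Lemma~\ref{lem:q-associate}, and both settle the final claim by combining the inclusion coming from $L_G\circ L_g=L_{x^n-1}$ with a rank--nullity cardinality count. Your additional derivative computation showing $L_F$ is separable is a harmless redundancy, since the paper obtains simplicity of the roots directly from the divisibility $L_F\mid x^{q^n}-x$ and the fact that $x^{q^n}-x$ has simple roots.
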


\begin{proof}
We observe that, for $h(x)=x^n-1$, the roots of $L_{h}(x)=x^{q^n}-x$ are simple and comprise the field $\F_{q^n}$.  From Lemma~\ref{lem:q-associate}, we have that $\gcd(L_f(x), x^{q^n}-x)=L_F(x)$. In particular, $L_F$ splits completely over $\F_{q^n}$ into distinct linear factors. Therefore, $L_f(y)=0$ has $\deg(L_F)=q^{\deg(F)}$ solutions over $\F_{q^n}$. As $L_F$ is $q$-linearized, the evaluation map $c\mapsto L_f(c)$ is an $\F_q$-linear map over $\F_{q^n}$. Therefore, the set of solutions of $L_f(y)=0$ comprise an $\F_q$-vector space. For the last statement, observe that $L_G(L_g(y))=y^{q^n}-y=0$ for every $y\in \F_{q^n}$, hence the image set $L_g(\F_{q^n})$ is contained in the set of the roots of $L_G(x)$. From the Rank-Nullity Theorem and the fact that $g(x)$ divides $x^n-1$, $L_g(\F_{q^n})$ has $q^{n-\deg(g)}=\deg(L_G)$ elements, from where the result follows.
\end{proof}

We obtain the following result.

\begin{proposition}\label{prop:values}
Let $f, g\in \F_q[x]$, and let  $V$ be the set of distinct roots of $L_f(x)=0$ over $\F_{q^n}$. Then the map $c\mapsto L_g(c)$ is a one to one correspondence  of $V$ with itself if and only if $\gcd(f(x), g(x), x^n-1) =1$. 
\end{proposition}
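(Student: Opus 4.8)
The plan is to exploit the $\F_q$-linearity of the evaluation map $c\mapsto L_g(c)$ together with the dictionary between polynomial operations and their $q$-associates supplied by Lemma~\ref{lem:q-associate} and Corollary~\ref{cor:roots}. First I would check that $L_g$ genuinely maps $V$ into itself, so that the phrase ``a one to one correspondence of $V$ with itself'' is meaningful. Since $L_f(L_g(c))=L_{fg}(c)=L_{gf}(c)=L_g(L_f(c))$ and $L_f(c)=0$ for every $c\in V$, we get $L_f(L_g(c))=L_g(0)=0$, whence $L_g(c)\in V$. Thus $L_g$ restricts to a well-defined map $V\to V$, independently of any coprimality hypothesis.

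Next I would recall from Corollary~\ref{cor:roots} that $V$ is a finite-dimensional $\F_q$-vector space, and, as $L_g$ is $q$-linearized, it induces an $\F_q$-linear endomorphism of $V$. For an endomorphism of a finite-dimensional vector space, bijectivity is equivalent to injectivity, and hence to the triviality of the kernel $\{c\in V: L_g(c)=0\}$. This reduces the proposition to showing that this kernel is trivial if and only if $\gcd(f,g,x^n-1)=1$.

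To analyze the kernel, observe that an element $c\in V$ with $L_g(c)=0$ is precisely a common root in $\F_{q^n}$ of $L_f$ and $L_g$. By Lemma~\ref{lem:q-associate}, $\gcd(L_f,L_g)=L_d$ with $d=\gcd(f,g)$, so these common roots are exactly the roots of $L_d$ in $\F_{q^n}$. Applying Corollary~\ref{cor:roots} to $d$, their number equals $q^{\deg(D)}$ where $D(x)=\gcd(d,x^n-1)=\gcd(f,g,x^n-1)$. The kernel consists of the single element $0$ exactly when $q^{\deg(D)}=1$, i.e. $\deg(D)=0$, i.e. $D=1$, which is the asserted coprimality condition. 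Chaining the three reductions yields the claimed equivalence.

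The arguments above are routine once the framework is in place, so I do not anticipate a serious obstacle; the only points needing a little care are the finite-dimensionality step that allows me to replace ``bijective'' by ``injective'', and the bookkeeping ensuring that nesting the two gcd identities collapses correctly to $\gcd(f,g,x^n-1)$ rather than to some intermediate factor.
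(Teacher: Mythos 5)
Your proposal is correct and follows essentially the same route as the paper's proof: first verifying $L_g(V)\subseteq V$ via $L_{fg}=L_{gf}$, then using $\F_q$-linearity to reduce bijectivity on $V$ to triviality of the kernel, and finally identifying that kernel with the roots of $L_{\gcd(f,g,x^n-1)}$ via Lemma~\ref{lem:q-associate} and Corollary~\ref{cor:roots}. The only difference is that you spell out the final gcd bookkeeping more explicitly than the paper, which simply lists the equivalences.
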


\begin{proof}
First, we show that $L_g(V)\subseteq V$. In fact, for $v\in V$, we have that $L_f(L_g(v))=L_{fg}(v)=L_g(L_f(v))=L_g(0)=0$ and so $L_g(v)\in V$.  Therefore, from Corollary~\ref{cor:roots} and the fact that $y\mapsto L_g(y)$ is $\F_q$-linear, the following are equivalent:

\begin{enumerate}[(i)]
\item the map $c\mapsto L_g(c)$ is a one to one correspondence  of $V$ with itself;
\item the only root of $L_g(x)$ lying in $V$ is $x=0$;
\item the only common root of $L_g(x)$ and $L_f(x)$ lying in $\F_{q^n}$ is $x=0$;
\item $\gcd(f(x), g(x), x^n-1)=1$.
\end{enumerate}
\end{proof}

\subsection{A permutation criterion}
We observe that a polynomial like in Eq.~\eqref{eq:main} can be written as $f(L_g(x))+k(L_g(x))\cdot L_h(x)$, where $g, h\in \F_q[x]$ are polynomials such that $g(x)$ divides $x^n-1$. The following theorem provides a general permutation criterion on when such a polynomial permutes $\F_{q^n}$ and it is the starting point of our results.

\begin{theorem}\label{agw-linearized}
Let $P\in \F_{q^n}[x]$ be such that $P(x)=f(L_g(x))+k(L_g(x))\cdot L_h(x)$, where $g, h\in \F_q[x]$, $g(x)$ is a monic divisor of $x^n-1$ and $k(L_g(\F_{q^n}))\subseteq \F_q^*$. Then $P$ is a PP of $\F_{q^n}$ if and only if the following hold:

\begin{enumerate}[(i)]
\item   $\gcd(g, h)=1$; 
\item the polynomial $Q(x)=L_g(f(x))+k(x)\cdot L_h(x)$ permutes the set $L_g(\F_{q^n})$.
\end{enumerate}
\end{theorem}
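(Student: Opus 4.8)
The plan is to apply the AGW criterion (Lemma~\ref{lemma:AGWCriterion}) with a carefully chosen commutative diagram. Set $A=\F_{q^n}$, let $\lambda=\bar\lambda$ be the map $c\mapsto L_g(c)$, and let $S=\bar S=L_g(\F_{q^n})$, which is a legitimate choice since $L_g$ is $\F_q$-linear and surjective onto its image. For the bottom map I would take $\bar f=Q$, the polynomial in statement (ii). The first task is to verify the intertwining relation $\bar\lambda\circ P = Q\circ\lambda$, i.e. $L_g(P(x))=Q(L_g(x))$ for all $x\in\F_{q^n}$. Expanding the left side using $L_g(f(L_g(x)))=L_g(f(L_g(x)))$ and the additivity of $L_g$ together with Lemma~\ref{lem:q-associate} (so that $L_g(L_h(x))=L_{gh}(x)=L_h(L_g(x))$), and using that $k(L_g(x))\in\F_q$ commutes with the $\F_q$-linear map $L_g$, I expect the two sides to match after rewriting $L_g\bigl(k(L_g(x))\cdot L_h(x)\bigr)=k(L_g(x))\cdot L_g(L_h(x))=k(L_g(x))\cdot L_h(L_g(x))$. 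This is the routine computational heart of the setup, and it establishes that the diagram commutes with $\bar f=Q$.

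Once the diagram commutes, the AGW criterion tells me that $P$ permutes $A$ if and only if $Q$ permutes $S=L_g(\F_{q^n})$ and $P$ is injective on each fiber $\lambda^{-1}(s)$. The condition that $Q$ permutes $L_g(\F_{q^n})$ is exactly statement (ii), so the remaining work is to show that the fiber-injectivity condition is equivalent to statement (i), namely $\gcd(g,h)=1$. A fiber $\lambda^{-1}(s)$ is a coset of $\ker L_g=\{c:L_g(c)=0\}$, so for two elements $x,x'$ in the same fiber we have $L_g(x)=L_g(x')=s$ and thus $f(L_g(x))=f(L_g(x'))$ and $k(L_g(x))=k(L_g(x'))=:c_s\in\F_q^*$. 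Hence $P(x)-P(x')=c_s\cdot\bigl(L_h(x)-L_h(x')\bigr)=c_s\cdot L_h(x-x')$, and since $x-x'\in\ker L_g$, injectivity on every fiber is equivalent to the statement that $L_h$ restricted to $\ker L_g$ is injective.

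The final step is to translate this last condition into the coprimality statement (i). The kernel $\ker L_g$ is precisely the set $V$ of roots of $L_g$ in $\F_{q^n}$ (here $V$ plays the role of the set in Proposition~\ref{prop:values} with $f$ replaced by $g$). Since $L_h$ maps $\ker L_g$ into itself — this follows just as in the proof of Proposition~\ref{prop:values}, because $L_g(L_h(v))=L_{gh}(v)=L_h(L_g(v))=0$ for $v\in\ker L_g$ — injectivity of $L_h$ on the finite set $\ker L_g$ is equivalent to $L_h$ being a bijection of $\ker L_g$ onto itself. By Proposition~\ref{prop:values} (applied with the roles of the two polynomials being $g$ and $h$), this bijectivity holds if and only if $\gcd(g,h,x^n-1)=1$; and because $g$ is by hypothesis a monic divisor of $x^n-1$, one has $\gcd(g,h,x^n-1)=\gcd(g,h)$, so the condition reduces to $\gcd(g,h)=1$, which is statement (i). Combining the three steps yields the claimed equivalence. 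I expect the main obstacle to be the bookkeeping in the commutativity verification — keeping track of where the scalar $k(L_g(x))\in\F_q^*$ may be pulled through the $\F_q$-linear map $L_g$ — rather than any conceptual difficulty, since Proposition~\ref{prop:values} already packages the coprimality equivalence cleanly.
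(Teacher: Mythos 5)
Your proof is correct and follows essentially the same route as the paper: the paper simply invokes the additive case of the AGW criterion (Theorem~1.5 of \cite{AGW}) as a packaged statement and then applies Proposition~\ref{prop:values} to translate the kernel condition into $\gcd(g,h)=1$, exactly as you do. The only difference is that you derive that special case by hand from Lemma~\ref{lemma:AGWCriterion} — verifying $L_g(P(x))=Q(L_g(x))$ and reducing fiber-injectivity to injectivity of $L_h$ on $\ker L_g$ — which the paper leaves to the cited reference.
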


\begin{proof}
We employ a special case of the AGW criterion~\cite{AGW}. In the notation of Theorem~1.5 in~\cite{AGW}, take $\Psi=\overline{\Psi}=L_g$ and $\varphi=L_h$. Then $P$ permutes $\F_{q^n}$ if and only if $Q(x)=L_g(f(x))+k(x)\cdot L_h(x)$ permutes the set $L_g(\F_{q^n})$ and $0\in \F_{q^n}$ is the only common root of $L_g$ and $L_h$, defined over $\F_{q^n}$. Since $g$ divides $x^n-1$, Proposition~\ref{prop:values} entails that the latter holds if and only if $\gcd(g, h)=1$.
\end{proof}

We observe that the previous theorem is not effective if $g(x)= 1$. So we may require that $g(x)$ is not trivial, i.e., we are looking for polynomials with $g$ of positive degree (preferably high). The following result provides some PP's arising from a generic divisor of $x^n-1$.

 \begin{proposition}\label{prop:generic-g}
 Let $P\in \F_{q^n}[x]$ be such that $P(x)=f(L_g(x))+L_h(x)$, where $g, h\in \F_q[x]$ and $g(x)$ is a divisor of $x^n-1$.  Suppose that $L_g(f(L_g(y))) = 0$ for every $y\in \F_{q^n}$, i.e., the image of the set $L_g(\F_{q^n})$ by $f(x)$ is contained in the kernel of $L_g$. Then $P$ is a PP of $\F_{q^n}$ if and only if  $\gcd(x^n-1, h(x))=1$. The former holds if, for instance, $f$ is of the form $L_{G}(f_0(x))$ with $f_0\in \F_{q^n}[x]$ and $G(x)=\frac{x^n-1}{g(x)}$. 
 \end{proposition}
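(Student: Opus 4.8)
The plan is to recognize $P(x)=f(L_g(x))+L_h(x)$ as the special case $k\equiv 1$ of the polynomial treated in Theorem~\ref{agw-linearized}. Since the constant polynomial $1$ trivially satisfies $k(L_g(\F_{q^n}))=\{1\}\subseteq\F_q^*$, that theorem applies verbatim and tells us that $P$ permutes $\F_{q^n}$ if and only if (i) $\gcd(g,h)=1$, and (ii) $Q(x)=L_g(f(x))+L_h(x)$ permutes the set $S:=L_g(\F_{q^n})$. The whole argument then reduces to re-expressing conditions (i)--(ii) in terms of $x^n-1$ and $h$ alone.

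The first substantive step is to exploit the standing hypothesis $L_g(f(L_g(y)))=0$. This says precisely that $L_g(f(c))=0$ for every $c\in S$, so on $S$ the polynomial $Q$ collapses to the $\F_q$-linear map $c\mapsto L_h(c)$; hence (ii) becomes ``$L_h$ permutes $S$''. Next I would identify $S$ with a kernel so as to invoke Proposition~\ref{prop:values}: by Corollary~\ref{cor:roots}, with $G(x)=\frac{x^n-1}{g(x)}$, the image set $S=L_g(\F_{q^n})$ is exactly the root set of $L_G$ over $\F_{q^n}$. Applying Proposition~\ref{prop:values} with the pair $(G,h)$ in the roles of $(f,g)$ shows that $L_h$ maps $S$ bijectively onto itself if and only if $\gcd(G,h,x^n-1)=1$; since $G$ divides $x^n-1$, this simplifies to $\gcd(G,h)=1$, so (ii) reads $\gcd(G,h)=1$.

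The final step is to merge the two coprimality conditions. Using the factorization $g(x)\cdot G(x)=x^n-1$, the conditions $\gcd(g,h)=1$ and $\gcd(G,h)=1$ hold simultaneously if and only if $\gcd(gG,h)=\gcd(x^n-1,h)=1$, which is the stated criterion. For the concluding remark, taking $f=L_G(f_0)$ gives, for any $c$ with $f_0(c)\in\F_{q^n}$, the identity $L_g(f(c))=L_{gG}(f_0(c))=L_{x^n-1}(f_0(c))=(f_0(c))^{q^n}-f_0(c)=0$; in particular this holds for all $c\in S$, verifying the hypothesis.

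The argument is mostly a matter of correctly feeding the earlier results, so there is no deep obstacle; the two points demanding care are both in the middle step. One must use $G$ (not $g$) as the polynomial whose linearized associate cuts out the set $S$ when invoking Proposition~\ref{prop:values}, and observe that the factor $\gcd(\,\cdot\,,x^n-1)$ drops out precisely because $G\mid x^n-1$. Coupled with the elementary but essential amalgamation $\gcd(g,h)=\gcd(G,h)=1\iff\gcd(x^n-1,h)=1$ coming from $gG=x^n-1$, this is what turns the abstract permutation conditions of Theorem~\ref{agw-linearized} into the single clean divisibility statement.
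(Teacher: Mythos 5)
Your proposal is correct and follows essentially the same route as the paper: specialize Theorem~\ref{agw-linearized} to $k(x)=1$, use the hypothesis to reduce condition (ii) to ``$L_h$ permutes $L_g(\F_{q^n})$'', identify that set as the root set of $L_G$ via Corollary~\ref{cor:roots}, apply Proposition~\ref{prop:values}, and combine the two coprimality conditions using $gG=x^n-1$. Your verification of the final remark (that $f=L_G(f_0)$ satisfies the hypothesis) is also correct and is in fact slightly more explicit than the paper, which leaves that point unproved.
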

  
\begin{proof}
We employ Theorem~\ref{agw-linearized} with $k(x)=1$. From hypothesis, $Q(z)=L_g(f(z))+L_h(z)=L_h(z)$ for every $z\in L_g(\F_{q^n})$. Hence Theorem~\ref{agw-linearized} implies that $P$ is a PP of $\F_{q^n}$ if and only if $\gcd(g, h)=1$ and $L_h(x)$ permutes the set $L_g(\F_{q^n})$. From Corollary~\ref{cor:roots}, $L_g(\F_{q^n})$ equals the set of distinct roots of $L_{g_0}(x)$, where $g_0(x)=\frac{x^n-1}{g(x)}$ and then, by Proposition~\ref{prop:values}, the former holds if and only if $\gcd(g_0, h)=1$. In conclusion, $P$ is a PP of $\F_{q^n}$ if and only if $\gcd(g, h)=\gcd(g_0, h)=1$. Since $g(x)\cdot g_0(x)=x^n-1$, the latter is equivalent  to $\gcd(x^n-1, h(x))=1$.

\end{proof}

The following proposition provides a family of PP's that arise from Theorem~\ref{agw-linearized} with a special restriction on the polynomials $g$ and $h$.

\begin{proposition}\label{prop:PP's}
Let $g, h\in \F_q[x]$ be relatively prime polynomials such that $g(x)$ divides $x^n-1$ and $g(x)\cdot h(x)$ is divisible by $x^n-1$.  For every polynomial $f\in \F_{q^n}[x]$,  $P_{f, g, h}(x):=f(L_g(x))+L_h(x)$ is a PP of $\F_{q^n}$ if and only if $L_g(f(x))$ permutes the set $L_g(\F_{q^n})$. In particular, if $f(L_g(\F_{q^n}))\subseteq L_g(\F_{q^n})$, the latter holds if and only if $f$ permutes the set $L_g(\F_{q^n})$.
\end{proposition}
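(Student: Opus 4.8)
The plan is to apply Theorem~\ref{agw-linearized} with $k(x)=1$ and then use the extra hypothesis $(x^n-1)\mid g(x)h(x)$ to simplify the auxiliary map $Q$. First I would observe that the coprimality hypothesis $\gcd(g,h)=1$ is precisely condition (i) of Theorem~\ref{agw-linearized}, so that $P_{f,g,h}$ permutes $\F_{q^n}$ if and only if $Q(x)=L_g(f(x))+L_h(x)$ permutes the set $L_g(\F_{q^n})$.

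The key observation is that $L_h$ vanishes identically on $L_g(\F_{q^n})$. Writing $G(x)=\frac{x^n-1}{g(x)}$, which is a polynomial since $g$ divides $x^n-1$, the hypothesis $(x^n-1)\mid gh$ is equivalent to $G\mid h$; say $h=G\cdot m$ for some $m\in \F_q[x]$. By Lemma~\ref{lem:q-associate} we have $L_h=L_m\circ L_G$, and by Corollary~\ref{cor:roots} every $z\in L_g(\F_{q^n})$ is a root of $L_G$, so $L_h(z)=L_m(0)=0$. Consequently $Q(z)=L_g(f(z))$ for every $z\in L_g(\F_{q^n})$, and hence $Q$ permutes $L_g(\F_{q^n})$ if and only if $x\mapsto L_g(f(x))$ does. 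This establishes the first equivalence.

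For the refinement, under the assumption $f(L_g(\F_{q^n}))\subseteq L_g(\F_{q^n})$ I would show that $L_g$ itself restricts to a bijection of $V:=L_g(\F_{q^n})$ onto itself. By Corollary~\ref{cor:roots}, $V$ is the set of roots of $L_G$, so Proposition~\ref{prop:values} (with $G$ playing the role of $f$ and $g$ as the acting polynomial) tells us that $c\mapsto L_g(c)$ permutes $V$ exactly when $\gcd(G,g,x^n-1)=1$. Since $G\mid h$ and $\gcd(g,h)=1$, any common factor of $g$ and $G$ divides $\gcd(g,h)=1$, so $\gcd(g,G)=1$ and the condition holds. Then $L_g\circ f$ and $f$ are self-maps of the finite set $V$ differing by composition with the bijection $L_g|_V$, whence one is a permutation of $V$ if and only if the other is.

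The main obstacle I anticipate is bookkeeping rather than conceptual: correctly translating the two divisibility hypotheses ($g\mid x^n-1$ and $(x^n-1)\mid gh$) into the two facts that drive the argument, namely that $L_h$ dies on $L_g(\F_{q^n})$ and that $\gcd(g,G)=1$, and invoking Proposition~\ref{prop:values} with the right polynomial in each slot. Once these are in place, both equivalences follow formally from the elementary fact that composing with a bijection preserves the permutation property on a finite set.
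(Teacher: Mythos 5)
Your proposal is correct and follows essentially the same route as the paper: both reduce to Theorem~\ref{agw-linearized} with $k(x)=1$, observe that $L_h$ vanishes on $L_g(\F_{q^n})$ because $(x^n-1)\mid gh$ (the paper phrases this as $L_{gh}(y)=0$ for all $y$, you via $h=G\cdot m$ and Corollary~\ref{cor:roots}), and for the refinement both derive $\gcd(g,G)=1$ from $G\mid h$ and $\gcd(g,h)=1$ and then apply Proposition~\ref{prop:values} to see that $L_g$ permutes $L_g(\F_{q^n})$. No substantive differences.
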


\begin{proof}
The first part follows directly from Theorem~\ref{agw-linearized} since $L_{gh}(y)=L_{x^n-1}(y)=y^{q^n}-y=0$ for every $y\in \F_{q^n}$. For the second part, it suffices to prove that $L_g$ permutes the set $L_g(\F_{q^n})$. From Corollary~\ref{cor:roots}, $L_g(\F_{q^n})$ is just the set of the roots of the polynomial $L_{g_0}(x)$, where $g_0(x)=\frac{x^n-1}{g(x)}$. Since $x^n-1$ divides $g(x)\cdot h(x)$, it follows that $g_0$ divides $h$ and, since $\gcd(g, h)=1$, we conclude that $\gcd(g, g_0)=1$. The latter combined with Proposition~\ref{prop:values} implies that $L_g$ permutes the set $L_g(\F_{q^n})$. 
\end{proof}

\section{The case $g(x)=\frac{x^n-1}{x-1}$} 

Here we focus on PP's arising from the factor $g(x)=\frac{x^n-1}{x-1}$. In other words, we are considering permutation polynomials of the form  $$f(\Tr_{q^n/q}(x))+k(\Tr_{q^n/q}(x))\cdot L_h(x),$$ where $k, h\in \F_{q}[x]$, $f\in \F_{q^n}[x]$ and $k(\F_{q})\subseteq \F_q^*$. The following definition is frequently used.

\begin{definition}
For a polynomial $f(x)=\sum_{i=0}^da_ix^i\in \F_{q^n}[x]$, we set $$T_n[f](x)=\sum_{i=0}^d\Tr_{q^n/q}(a_i)\cdot x^i\in \F_q[x].$$ 
\end{definition}

We obtain the following result.

\begin{theorem}\label{thm:trace}
The polynomial $P(x)=f(\Tr_{q^n/q}(x))+k(\Tr_{q^n/q}(x))\cdot L_h(x)\in \F_{q^n}[x]$ with $h\in \F_q[x]$ and $k(\F_{q})\subseteq \F_q^*$ is a PP over $\F_{q^n}$ if and only if the following conditions hold:

\begin{enumerate}
\item $\gcd\left(h(x), \frac{x^n-1}{x-1}\right)=1$; 
\item $Q(x):=T_n[f](x)+k(x)\cdot h(1)\cdot  x\in \F_q[x]$ is a PP over $\F_q$.
\end{enumerate}
In affirmative case, if $R$ is the inverse PP of $Q$ over $\F_q$, then the inverse PP of $P$ over $\F_{q^n}$ equals $$P_0(x)=F(\Tr_{q^n/q}(x))+k(R( \Tr_{q^n/q}(x)))^{q-2}\cdot L_H(x),$$ where $H\in \F_q[x]$ and $F\in \F_{q^n}[x]$ are given as follows:
\begin{enumerate}[(i)]
\item if $p\mid n$, then $H\in \F_q[x]$ is the unique polynomial of degree at most $n-1$ such that $h(x)\cdot H(x)\equiv 1\pmod {x^n-1}$ and $F$ is any polynomial satisfying $F(x)\equiv -k(R(x))^{q-2}\cdot L_H(f(R(x)))\pmod {x^{q}-x}$;
\item if $p\nmid n$, then $H\in \F_q[x]$ is the unique polynomial of degree at most $n-2$ such that $h(x)\cdot H(x)\equiv 1\pmod {\frac{x^n-1}{x-1}}$ and $F$ is any polynomial satisfying $F(x)\equiv M(R(x))\pmod {x^q-x}$, where $$M(x)=-k(x)^{q-2}\cdot L_H(f(x))+x\cdot \frac{1-h(1)\cdot H(1)}{n}.$$
\end{enumerate}
We remark that the polynomial $H$ in case (i) always exists since the condition $\gcd\left(h(x), \frac{x^n-1}{x-1}\right)=1$ is equivalent to $\gcd(h(x), x^n-1)=1$ if $p \mid n$.
\end{theorem}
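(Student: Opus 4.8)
The plan is to derive everything from Theorem~\ref{agw-linearized} applied with $g(x)=\frac{x^n-1}{x-1}$, so that $L_g=\Tr_{q^n/q}$ and $L_g(\F_{q^n})=\F_q$. Condition (i) of that theorem is exactly condition (1) here, so the whole task reduces to making condition (ii) explicit: rewriting the induced map $Q_0(z)=L_g(f(z))+k(z)\cdot L_h(z)$ on $L_g(\F_{q^n})=\F_q$. First I would record the two elementary evaluations valid for $z\in\F_q$: since $z^i\in\F_q$ and $\Tr_{q^n/q}$ is $\F_q$-linear, $\Tr_{q^n/q}(f(z))=\sum_i\Tr_{q^n/q}(a_i)z^i=T_n[f](z)$; and since $z^{q^i}=z$, one has $L_h(z)=\sum_i h_i z=h(1)\,z$. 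Substituting gives $Q_0(z)=T_n[f](z)+k(z)h(1)z=Q(z)$ for every $z\in\F_q$, so $Q_0$ permutes $\F_q$ if and only if $Q$ does, which is condition (2). This settles the equivalence.

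For the inverse, the first step is to recover $\Tr_{q^n/q}(x)$. Writing $t=\Tr_{q^n/q}(x)$ and using $\Tr_{q^n/q}(x^{q^i})=\Tr_{q^n/q}(x)$ together with $k(t)\in\F_q$, the same computation shows $\Tr_{q^n/q}(P(x))=Q(t)$; hence if $y=P(x)$ then $t=R(\Tr_{q^n/q}(y))$ with $R=Q^{-1}$. Knowing $t$, the defining equation $y=f(t)+k(t)L_h(x)$ yields $L_h(x)=k(t)^{-1}(y-f(t))$, and it remains to invert $L_h$; here $k(t)^{-1}=k(t)^{q-2}$ because $k(t)\in\F_q^*$.

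In case (i), $p\mid n$ forces $(x-1)\mid g$, so $\gcd(h,g)=1$ upgrades to $\gcd(h,x^n-1)=1$ (this is the remark), and by Corollary~\ref{cor:roots} the kernel of $L_h$ on $\F_{q^n}$ is trivial; thus $L_H=L_h^{-1}$ on $\F_{q^n}$ for $hH\equiv1\pmod{x^n-1}$. Applying $L_H$ to $L_h(x)=k(t)^{-1}(y-f(t))$ and using $\F_q$-linearity gives $x=k(t)^{q-2}\bigl(L_H(y)-L_H(f(t))\bigr)$, which is the claimed $P_0$ with $F(u)\equiv-k(R(u))^{q-2}L_H(f(R(u)))$.

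The delicate case is (ii), where $p\nmid n$ means $L_h$ need not be injective on $\F_{q^n}$ and one can only impose $hH\equiv1\pmod g$. Writing $hH=1+gs$ and using Lemma~\ref{lem:q-associate} gives $L_H(L_h(x))=x+L_g(L_s(x))=x+\Tr_{q^n/q}(L_s(x))$, and the correction term must be pushed back to a function of $t$ alone: $\Tr_{q^n/q}(L_s(x))=L_{gs}(x)=L_s(\Tr_{q^n/q}(x))=L_s(t)=s(1)t$. Evaluating $hH=1+gs$ at $x=1$ and using $g(1)=n\neq0$ gives $s(1)=\frac{h(1)H(1)-1}{n}$, whence $x=L_H(k(t)^{-1}(y-f(t)))+\frac{1-h(1)H(1)}{n}t$. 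Collecting the terms depending only on $t=R(\Tr_{q^n/q}(y))$ reproduces $F(u)\equiv M(R(u))\pmod{x^q-x}$ with the stated $M$. I expect the tracking of this additive correction $\frac{1-h(1)H(1)}{n}t$ — arising precisely because $L_H$ inverts $L_h$ only modulo the trace contribution — to be the main obstacle; the rest is bookkeeping with $\F_q$-linearity and the congruences defining $H$ and $R$.
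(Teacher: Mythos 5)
Your proposal is correct and follows essentially the same route as the paper: the permutation criterion is obtained by specializing Theorem~\ref{agw-linearized} to $g(x)=\frac{x^n-1}{x-1}$ and computing the induced map on $\F_q$, and the inverse formula rests on the same identity $L_H\circ L_h=\mathrm{id}+L_{gs}$ with correction term $s(1)\cdot\Tr_{q^n/q}(x)=\frac{h(1)H(1)-1}{n}\cdot\Tr_{q^n/q}(x)$. The only cosmetic difference is that you derive $P_0$ by solving $y=P(x)$ forward, whereas the paper verifies $P_0(P(a))=a$ by writing $a=e_a/n+\delta$ with $\Tr_{q^n/q}(\delta)=0$ --- an equivalent bookkeeping of the same correction term.
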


\begin{proof}
Observe that, for $a\in \F_q$, $\Tr_{q^n/q}(f(a))=T_n[f](a)$. In particular, $\Tr_{q^n/q}(f(y))+k(y)\cdot L_h(y)=Q(y)$ for every $y\in \F_q$. Since $\Tr_{q^n/q}(\F_{q^n})=\F_q$, 
the permutation criterion on $P$ follows by Theorem~\ref{agw-linearized}. It remains to prove the statement regarding the inverse of $P$. For $a\in \F_{q^n}$, set $e_{a}=\Tr_{q^n/q}(a)\in \F_q$, hence $P(a)=f(e_a)+k(e_a)\cdot L_h(a)$ and so $\Tr_{q^n/q}(P(a))=Q(e_a)$ for every $a\in \F_{q^n}$. We obtain that
\begin{align*}P_0(P(a))& =F(\Tr_{q^n/q}(P(a))+k(R(\Tr_{q^n/q}(P(a))))^{q-2}\cdot L_H(P(a))\\ {} &= F(Q(e_a))+k(R(Q(e_a)))^{q-2}\cdot L_H(f(e_a)+k(e_a)\cdot L_h(a))\\ {}& = F(Q(e_a))+k(e_a)^{q-2}\cdot (L_H(f(e_a))+k(e_a)\cdot L_{hH}(a))\\ {}& = F(Q(e_a))+k(e_a)^{q-2}\cdot L_H(f(e_a))+ L_{hH}(a),\end{align*}
where in the last equality we used the fact that $k(\F_q)\subseteq \F_q^*$, i.e., $k(e_a)^{q-1}=1$. We split the proof into the cases $p\mid n$ and $p\nmid n$.

\begin{enumerate}[(i)]
\item If $p\mid n$ and $F(x)\equiv -k(R(x))^{q-2}\cdot L_H(f(R(x)))\pmod{x^q-x}$, we obtain that 
\begin{align*}P_0(P(a))&=-k(R(Q(e_a)))^{q-2}\cdot L_H(f(R(Q(e_{a}))))+k(e_a)^{q-2}\cdot L_H(f(e_{a}))+L_{hH}(a)\\{} & =-k(e_a)^{q-2}\cdot L_H(f(e_a))+k(e_a)^{q-2}\cdot L_H(f(e_{a}))+L_{hH}(a)=a ,\end{align*}
where in the last equality we used the fact that $h(x)\cdot H(x)\equiv 1\pmod {x^n-1}$.

\item If $p\nmid n$, from $\Tr_{q^n/q}(1)=n\ne 0\in \F_q$, it follows that $a=e_a/n+\delta$ for some $\delta\in \F_{q^n}$ with $\Tr_{q^n/q}(\delta)=0$. Since $h(x)\cdot H(x)\equiv 1\pmod {\frac{x^n-1}{x-1}}$, we have that $$L_{hH}(a)=L_{hH}(\delta+e_a/n)=\delta+L_{hH}(e_a/n)=\delta+e_a\cdot H(1)\cdot h(1)/n.$$
Therefore, 
$$P_0(P(a))=F(Q(e_a))+k(e_a)^{q-2}\cdot L_H(f(e_a))+\delta+e_a\cdot H(1)\cdot h(1)/n.$$
Since $F(x)\equiv M(R(x))\pmod {x^q-x}$ and $M(x)=-k(x)^{q-2}\cdot L_H(f(x))+x\cdot \frac{1-h(1)\cdot H(1)}{n}$, we have that $$P_0(P(a))=M(e_a)+k(e_a)^{q-2}\cdot L_H(f(e_a))+\delta+e_a\cdot \frac{H(1)\cdot h(1)}{n}=\delta+\frac{e_a}{n}=a.$$

\end{enumerate}
\end{proof}

If $f, h\in \F_q[x]$, then $L_h(f(x))\equiv h(1)\cdot f(x)\pmod {x^q-x}$. In particular, Theorem~\ref{thm:trace} readily implies the following corollary. 

\begin{corollary}\label{cor:1}
The polynomial $P(x)=f(\Tr_{q^n/q}(x))+k(\Tr_{q^n/q}(x))\cdot L_h(x)$ with $f, h\in \F_q[x]$ and $k(\F_q)\subseteq \F_q^*$ is a $PP$ over $\F_{q^n}$ if and only if $\gcd\left(h(x), \frac{x^n-1}{x-1}\right)=1$ and $Q(x)=n\cdot f(x)+h(1)k(x)\cdot x$ induces a permutation of $\F_q$.
More specifically,  $P(x)=f(\Tr_{q^n/q}(x))+k(\Tr_{q^n/q}(x))\cdot L_h(x)$ is a PP of $\F_{q^n}$ if and only if one of the following holds: 
\begin{enumerate}[(i)]
\item  $p \mid n$, $\gcd(h(x), x^n-1)=1$ and $k(x)\cdot x\in \F_q[x]$ is a PP over $\F_q$;
\item $p\nmid n$,  $\gcd\left(h(x), x^n-1\right)=x-1$ and $f\in \F_q[x]$ is a PP over $\F_q$;
\item $p\nmid n$,  $\gcd\left(h(x), x^n-1\right)=1$ and $nf(x)+h(1)\cdot k(x)\cdot x$ is a PP over $\F_q$.
\end{enumerate}
\end{corollary}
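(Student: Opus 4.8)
The plan is to derive the corollary as a direct specialization of Theorem~\ref{thm:trace} to the subcase $f\in\F_q[x]$, and then to unpack the resulting permutation criterion according to whether $p$ divides $n$ and according to the value of $h(1)$. The first step is to compute $T_n[f]$ when $f\in\F_q[x]$. Writing $f(x)=\sum_i a_i x^i$ with $a_i\in\F_q$, we have $\Tr_{q^n/q}(a_i)=a_i+a_i^q+\cdots+a_i^{q^{n-1}}=n\cdot a_i$, since $a_i^{q^j}=a_i$ for every $j$. Hence $T_n[f](x)=n\cdot f(x)$, so the polynomial $Q$ in Theorem~\ref{thm:trace}(2) becomes $Q(x)=n\cdot f(x)+h(1)\,k(x)\cdot x$. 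Substituting this into Theorem~\ref{thm:trace} immediately yields the first assertion: $P$ permutes $\F_{q^n}$ if and only if $\gcd(h,\frac{x^n-1}{x-1})=1$ and $Q$ permutes $\F_q$.

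Next I would split into the cases $p\mid n$ and $p\nmid n$. Suppose first that $p\mid n$, so that $n=0$ in $\F_q$ and $Q(x)=h(1)\,k(x)\cdot x$. By the remark closing Theorem~\ref{thm:trace}, the condition $\gcd(h,\frac{x^n-1}{x-1})=1$ is then equivalent to $\gcd(h,x^n-1)=1$, which in turn forces $\gcd(h,x-1)=1$, i.e.\ $h(1)\ne 0$. Since multiplication by the nonzero scalar $h(1)$ preserves the permutation property, $Q$ permutes $\F_q$ if and only if $k(x)\,x$ does, giving case (i). Now suppose $p\nmid n$. Evaluating $\frac{x^n-1}{x-1}=x^{n-1}+\cdots+1$ at $x=1$ gives $n\ne 0$, so $x-1\nmid\frac{x^n-1}{x-1}$ and these two factors of $x^n-1$ are coprime; consequently, under condition (1) we have $\gcd(h,x^n-1)=\gcd(h,x-1)$, which equals $x-1$ when $h(1)=0$ and $1$ when $h(1)\ne 0$. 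In the first subcase $Q(x)=n\,f(x)$ (the $h(1)$-term vanishes), a PP of $\F_q$ if and only if $f$ is, since $n\ne 0$; this is case (ii). In the second subcase $h(1)\ne 0$ and $Q(x)=nf(x)+h(1)k(x)x$ is retained as stated, which is case (iii).

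The only genuinely nonroutine point is the bookkeeping of the gcd conditions: one must observe that $x-1$ divides $\frac{x^n-1}{x-1}$ precisely when $p\mid n$, so that condition (1) already controls the factor $x-1$ in the $p\mid n$ case (forcing $h(1)\ne 0$), whereas in the $p\nmid n$ case that factor is free and produces the dichotomy between cases (ii) and (iii). Everything else reduces to the substitution $T_n[f]=n\,f$ into Theorem~\ref{thm:trace}, combined with the elementary fact that a nonzero scalar multiple of a polynomial permutes $\F_q$ if and only if the polynomial itself does.
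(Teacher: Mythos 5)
Your proposal is correct and follows exactly the route the paper intends: the paper gives only a one-line justification ("Theorem~\ref{thm:trace} readily implies the following corollary"), and your computation $T_n[f]=n\cdot f$ for $f\in\F_q[x]$, together with the case analysis on $p\mid n$ versus $p\nmid n$ and on whether $x-1$ divides $\gcd(h,x^n-1)$, supplies precisely the details the paper leaves to the reader. No discrepancies with the paper's argument.
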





Recall that $b\in \F_q[x]$ is a complete permutation polynomial (CPP) or  complete mapping over $\F_q$ if both $b(x)$ and $b(x)+x$ are permutations of $\F_q$. We obtain the following corollary.

\begin{corollary}\label{cor:cpp}
Let $P(x)=f(\Tr_{q^n/q}(x))+L_h(x)\in \F_{q^n}[x]$ be a polynomial such that $h\in \F_q[x]$. Then $P(x)$ is a CPP over $\F_{q^n}$ if and only if the following hold:

\begin{enumerate}[(i)]
\item $\gcd\left(h(x)\cdot (h(x)+1), \frac{x^n-1}{x-1}\right)=1$; 
\item $Q(x):=T_n[f](x)+h(1)\cdot x\in \F_q[x]$ is a CPP over $\F_q$.
\end{enumerate}
\end{corollary}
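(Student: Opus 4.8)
The plan is to reduce the CPP condition to two applications of Theorem~\ref{thm:trace}, one for $P$ and one for $P+x$. By definition, $P$ is a CPP over $\F_{q^n}$ precisely when both $P(x)$ and $P(x)+x$ permute $\F_{q^n}$. The polynomial $P$ already has the shape required by Theorem~\ref{thm:trace}, with the trivial choice $k(x)=1$ (so the hypothesis $k(\F_q)\subseteq \F_q^*$ holds automatically). Hence Theorem~\ref{thm:trace} tells me that $P$ is a PP iff $\gcd\left(h, \frac{x^n-1}{x-1}\right)=1$ and $Q(x)=T_n[f](x)+h(1)\cdot x$ is a PP over $\F_q$, which already accounts for the part of condition (i) involving $h$ and for one half of condition (ii).

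The first key observation I would make is that $P(x)+x$ is again of the same form. Since the identity map equals $L_1(x)=x$, the linearized $q$-associate of the constant polynomial $1$, Lemma~\ref{lem:q-associate} gives $L_h(x)+x=L_h(x)+L_1(x)=L_{h+1}(x)$. Therefore $P(x)+x=f(\Tr_{q^n/q}(x))+L_{h+1}(x)$, which is again a polynomial of trace type as in Eq.~\eqref{eq:main}, with $h$ replaced by $h+1$ and still $k=1$. Applying Theorem~\ref{thm:trace} to this polynomial shows that $P+x$ is a PP iff $\gcd\left(h+1, \frac{x^n-1}{x-1}\right)=1$ and $T_n[f](x)+(h(1)+1)\cdot x$ is a PP over $\F_q$. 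Noting that $(h+1)(1)=h(1)+1$, this last polynomial is exactly $Q(x)+x$.

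It then remains to merge the two sets of conditions. The gcd parts, $\gcd\left(h,\frac{x^n-1}{x-1}\right)=1$ and $\gcd\left(h+1,\frac{x^n-1}{x-1}\right)=1$, hold simultaneously iff $\gcd\left(h(x)\cdot(h(x)+1),\frac{x^n-1}{x-1}\right)=1$, which is condition (i); the permutation parts say that both $Q$ and $Q+x$ permute $\F_q$, i.e. that $Q$ is a CPP over $\F_q$, which is condition (ii). Combining these yields the claimed equivalence.

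I do not anticipate a genuine obstacle: the only point requiring a little care is the bookkeeping of the gcd conditions, namely the equivalence between the coprimality of $h$ and of $h+1$ with $\frac{x^n-1}{x-1}$ separately and the single coprimality of the product $h(h+1)$ with $\frac{x^n-1}{x-1}$. This is immediate once one observes that $h$ and $h+1$ are themselves coprime, so no irreducible factor of $\frac{x^n-1}{x-1}$ can divide both. Everything else is a direct substitution into Theorem~\ref{thm:trace}, with the recognition $L_h(x)+x=L_{h+1}(x)$ as the single structural input.
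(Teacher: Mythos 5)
Your proposal is correct and follows essentially the same route as the paper: the paper's proof likewise observes that $P(x)+x=f(\Tr_{q^n/q}(x))+L_{h_0}(x)$ with $h_0=h+1$ and that $T_n[f](x)+h_0(1)\cdot x=Q(x)+x$, then applies Theorem~\ref{thm:trace} with $k(x)=1$ to both $P$ and $P+x$. Your extra remarks (the identification $L_h+L_1=L_{h+1}$ via Lemma~\ref{lem:q-associate} and the merging of the two gcd conditions into one on the product) are just the details the paper leaves implicit.
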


\begin{proof}
We observe that if $h_0(x)=h(x)+1$, then $P(x)+x=f(\Tr_{q^n/q}(x))+L_{h_0}(x)$ and $T_n[f](x)+h_0(1)\cdot x=Q(x)+x$. The result follows from Theorem~\ref{thm:trace} by taking $k(x)=1$.
\end{proof}

\subsection{From PP's of $\F_q$ to PP's of $\F_{q^n}$}
From Theorem~\ref{thm:trace}, we obtain the following method for producing permutations of the extension field $\F_{q^n}$ from permutations of the base field $\F_q$. 

\begin{theorem}\label{thm:const}
Let $b\in \F_q[x]$ be a permutation polynomial, let $n$ be a positive integer and let $h, k\in \F_q[x]$ be polynomials such that $\gcd\left(h(x), \frac{x^n-1}{x-1}\right) =1$ and $k(\F_q)\subseteq \F_q^*$. Then for every polynomial $f\in \F_{q^n}[x]$ such that $T_n[f](x)\equiv b(x)-h(1)\cdot k(x)\cdot x\pmod {x^q-x}$, the polynomial
$$\mathcal P_{b, f, h, k, n}(x):=f(\Tr_{q^n/q}(x))+k(\Tr_{q^n/q}(x))\cdot L_h(x),$$
permutes $\F_{q^n}$. 
\end{theorem}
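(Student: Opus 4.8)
The plan is to obtain this statement as an immediate consequence of Theorem~\ref{thm:trace}. The polynomial $\mathcal P_{b,f,h,k,n}(x)=f(\Tr_{q^n/q}(x))+k(\Tr_{q^n/q}(x))\cdot L_h(x)$ is precisely of the shape covered by that theorem, with the same $f$, $h$, and $k$, and the hypotheses already guarantee $h\in\F_q[x]$ and $k(\F_q)\subseteq\F_q^*$. Hence it suffices to verify the two permutation criteria (1) and (2) of Theorem~\ref{thm:trace} under the stated assumptions, after which that theorem delivers the conclusion directly.

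Condition (1), namely $\gcd\!\left(h(x),\frac{x^n-1}{x-1}\right)=1$, is assumed outright, so there is nothing to prove there. For condition (2), I would form the associated polynomial $Q(x)=T_n[f](x)+k(x)\cdot h(1)\cdot x\in\F_q[x]$ and reduce it modulo $x^q-x$. Substituting the defining congruence $T_n[f](x)\equiv b(x)-h(1)\cdot k(x)\cdot x\pmod{x^q-x}$ makes the two copies of $h(1)\cdot k(x)\cdot x$ cancel, so that $Q(x)\equiv b(x)\pmod{x^q-x}$. The key observation to invoke at this stage is that the map $\F_q\to\F_q$ induced by a polynomial in $\F_q[x]$ depends only on its residue class modulo $x^q-x$, because $c^q=c$ for every $c\in\F_q$. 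Consequently $Q$ and $b$ induce the same function on $\F_q$, and since $b$ is a permutation polynomial, $Q$ permutes $\F_q$, which is exactly condition (2).

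Having checked (1) and (2), Theorem~\ref{thm:trace} immediately gives that $\mathcal P_{b,f,h,k,n}$ permutes $\F_{q^n}$, completing the argument. I would also add a short remark that the family is nonempty: since $\Tr_{q^n/q}\colon\F_{q^n}\to\F_q$ is surjective, for each target coefficient in $\F_q$ one may choose a coefficient in $\F_{q^n}$ with the prescribed trace, so some $f\in\F_{q^n}[x]$ satisfying $T_n[f](x)\equiv b(x)-h(1)\cdot k(x)\cdot x\pmod{x^q-x}$ always exists. There is no genuine obstacle in this proof beyond this bookkeeping; the only real content is recognizing that the congruence imposed on $f$ is engineered precisely so that the reduction of $Q$ modulo $x^q-x$ coincides with $b$, thereby transporting the permutation property of $b$ over $\F_q$ into condition (2) of Theorem~\ref{thm:trace}.
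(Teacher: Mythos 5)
Your proposal is correct and follows exactly the route the paper intends: Theorem~\ref{thm:const} is stated as an immediate consequence of Theorem~\ref{thm:trace}, with condition (1) assumed and condition (2) holding because the congruence on $T_n[f]$ forces $Q(x)\equiv b(x)\pmod{x^q-x}$, so $Q$ and $b$ induce the same permutation of $\F_q$. The added remark on non-emptiness via surjectivity of the trace also matches the paper's subsequent discussion of the linear system $\Tr_{q^n/q}(y_i)=b_i-k_{i-1}h(1)$.
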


In simpler terms, Theorem~\ref{thm:const} provides the following procedure for constructing permutations of $\F_{q^n}$.
\begin{algorithm}
\begin{itemize}
\item Step 1: pick a permutation polynomial $b \in \F_q[x]$. \\
\item Step 2: pick polynomials $h, k\in \F_{q}[x]$ such that  $\gcd\left(h(x), \frac{x^n-1}{x-1}\right) =1$ and $k(\F_q)\subseteq \F_q^*$.
\item Step 3: find $f\in \F_{q^n}[x]$ such that $$T_n[f](x)\equiv b(x) - k(x)\cdot h(1)\cdot x\pmod {x^q-x}.$$ 
\item Step 4: construct the permutation polynomial $$\mathcal P_{b, f, h, k, n} (x) = f(\Tr_{q^n/q}(x))+k(\Tr_{q^n/q}(x))\cdot L_h(x).$$ 
\end{itemize}
\end{algorithm}

We observe that the functional equation $$T_n[f](x)\equiv b(x)-k(x)\cdot h(1)\cdot x\pmod {x^q-x},$$ with the restriction $\deg(f)<q$ gives rise to a linear system of $q$ equations involving trace functions. More specifically, if we write $f(x)=\sum_{i=0}^{q-1}y_ix^i, b(x)=\sum_{i=0}^{q-1}b_ix^i$ and $k(x)=\sum_{i=0}^{q-1}k_ix^i$, the functional equation is equivalent to the following system of equations:
\begin{align*}
\Tr_{q^n/q}(y_0) &= b_0,\\
\Tr_{q^n/q}(y_1) &= b_1 - k_0\cdot h(1)-k_{q-1}\cdot h(1),  \\
\Tr_{q^n/q}(y_i) &= b_i-k_{i-1}\cdot h(1), \; 2\le i\le n.
\end{align*}
In particular, for fixed $n, b, h$ and $k$, there are $q^{(n-1)q}$ distinct choices for $f$ under the condition $\deg(f)<q$. Moreover, such condition implies that the corresponding polynomials $\mathcal P_{b, f, h,k,  n}$ are all incongruent modulo $x^{q^n}-x$. Hence we obtain $q^{(n-1)q}$ distinct PP's of $\F_{q^n}$.

\begin{remark}\label{rem:main}
The following are easily verified.
\begin{enumerate}[(i)]

\item We may also iterate the procedure above, hence obtaining a sequence $\{\mathcal P_i\}_{i\ge 1}$ of polynomials such that $\mathcal P_i$ permutes $\F_{q^{n^i}}$ for every $i\ge 1$.

\item In the context of Theorem~\ref{thm:const}, if we also know the inverse of the permutation $b(x)$, we may employ Theorem~\ref{thm:trace} and obtain the inverse PP of any permutation $\mathcal P_{b, f, h, k, n}$. 

\item From Corollary~\ref{cor:cpp}, the method above can be extended to the construction of complete permutation polynomials of $\F_{q^n}$ with $k(x)=1$: we start with $b \in \F_q[x]$, a CPP of $\F_q$, and pick $h\in \F_q[x]$ such that  $\gcd\left(h(x)\cdot (h(x)+1), \frac{x^n-1}{x-1}\right)=1$. Similarly,  from such construction, we obtain $q^{(n-1)q}$ distinct CPP's of $\F_{q^n}$.
\end{enumerate}
\end{remark}

The following result exemplifies the applicability of Theorem~\ref{thm:const} to the explicit construction of permutations and complete permutation polynomials of $\F_{q^n}$ with $k(x)=1$. Its proof follows directly by Theorem~\ref{thm:const} and the fact that $\Tr_{q^n/q}(\theta^q-\theta)=0$ for every $\theta\in \F_{q^n}$. 

\begin{corollary}
Let $n$ be a positive integer, not divisible by the characteristic $p$ of $\F_q$. Let $h\in \F_q[x]$ be a polynomial such that  $\gcd\left(h(x), \frac{x^n-1}{x-1}\right) =1$ and let $b(x) = \sum_{i=0}^{m} b_i x^i$ be any permutation polynomial of $\fq$.   For every $\theta_0, \ldots,  \theta_{q-1}\in \F_{q^n}$,    the polynomial 
\begin{equation*}
P(x)=L_h(x)- \frac{h(1)}{n}  \cdot \Tr_{q^n/q}(x)+ \displaystyle{\sum_{i=0}^{m }\frac{b_i}{n}\cdot \Tr_{q^n/q}(x)^i + \sum_{i=0}^{q-1}(\theta_i^q-\theta_i )\cdot \Tr_{q^n/q}(x)^i },
\end{equation*}
permutes $\F_{q^n}$. In addition, if $\gcd\left(h(x)+1, \frac{x^n-1}{x-1}\right)=1$ and $b(x)$ is a CPP of $\F_q$, then $P(x)$ is also a CPP of $\F_{q^n}$.
\end{corollary}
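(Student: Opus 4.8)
The plan is to recognize the displayed polynomial $P$ as the member $\mathcal P_{b,f,h,k,n}$ of the family in Theorem~\ref{thm:const} corresponding to $k(x)=1$ and a suitable $f$, and then to verify the single hypothesis of that theorem, namely the congruence $T_n[f](x)\equiv b(x)-h(1)\cdot x\pmod{x^q-x}$. First I would set $k(x)=1$, for which $k(\F_q)\subseteq \F_q^*$ is automatic, and define
$$f(x)=-\frac{h(1)}{n}\cdot x+\sum_{i=0}^{m}\frac{b_i}{n}\cdot x^i+\sum_{i=0}^{q-1}(\theta_i^q-\theta_i)\cdot x^i\in\F_{q^n}[x],$$
where the scalars $h(1)/n$ and $b_i/n$ are legitimate precisely because $p\nmid n$ forces $n\ne 0$ in $\F_q$. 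With this $f$ one checks directly that $f(\Tr_{q^n/q}(x))+L_h(x)$ is the displayed $P(x)$, so the whole argument reduces to computing $T_n[f]$.

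Next I would evaluate $T_n[f]$ coefficient by coefficient using two elementary trace identities. For a scalar $c\in\F_q$ one has $\Tr_{q^n/q}(c)=n\cdot c$, so passing to $T_n$ undoes the division by $n$ and turns the coefficients $-h(1)/n$ and $b_i/n$ into $-h(1)$ and $b_i$; and for every $\theta\in\F_{q^n}$ the Frobenius-invariance of the trace gives $\Tr_{q^n/q}(\theta^q-\theta)=0$, so the entire third sum is annihilated by $T_n$. Collecting the surviving terms yields $T_n[f](x)=b(x)-h(1)\cdot x$, which is the required congruence (in fact an honest equality of polynomials). Theorem~\ref{thm:const} then delivers at once that $P$ permutes $\F_{q^n}$.

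For the complete mapping claim I would apply Corollary~\ref{cor:cpp} with the same $f$ and $k(x)=1$. The polynomial appearing there is $Q(x)=T_n[f](x)+h(1)\cdot x$, which by the computation above collapses to $Q(x)=b(x)$; hence its condition~(ii) holds exactly because $b$ is assumed to be a CPP of $\F_q$. Its condition~(i), that $\gcd\!\left(h(x)\cdot(h(x)+1),\tfrac{x^n-1}{x-1}\right)=1$, is equivalent to the pair of coprimality hypotheses $\gcd\!\left(h,\tfrac{x^n-1}{x-1}\right)=1$ and $\gcd\!\left(h+1,\tfrac{x^n-1}{x-1}\right)=1$ imposed in the statement, so $P$ is a CPP of $\F_{q^n}$.

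There is no genuinely hard step: once $P$ is matched to $\mathcal P_{b,f,h,1,n}$, everything rests on Theorem~\ref{thm:const} and Corollary~\ref{cor:cpp}. The only place demanding care is the trace computation, and in particular the two roles played by the hypotheses: $p\nmid n$ is exactly what makes the division by $n$ in $f$ meaningful, while the identity $\Tr_{q^n/q}(\theta_i^q-\theta_i)=0$ is what allows the $q$ elements $\theta_0,\dots,\theta_{q-1}$ to be entirely free without affecting the congruence, which is the source of the large supply of examples.
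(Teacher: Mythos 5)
Your proof is correct and follows exactly the route the paper intends: the paper's own justification is the one-line remark that the corollary ``follows directly by Theorem~\ref{thm:const} and the fact that $\Tr_{q^n/q}(\theta^q-\theta)=0$,'' which is precisely your computation of $T_n[f](x)=b(x)-h(1)\cdot x$ with $k(x)=1$, together with the appeal to Corollary~\ref{cor:cpp} for the complete-mapping claim. Nothing to add.
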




For a permutation monomial $b(x) = ax^m$ of $\fq$, Theorem~\ref{thm:const} immediately implies the following result. 

\begin{corollary}
Let $h\in \F_q[x]$ be a polynomial such that  $\gcd\left(h(x), \frac{x^n-1}{x-1}\right) =1$ and let $m$ be a positive integer such that $m<q-1$ and $\gcd(m, q-1)=1$. The following hold:
\begin{enumerate}[(i)]
\item if $m=1$, then for every $\alpha, \theta_0, \ldots, \theta_{q-1}\in \F_{q^n}$  such that $\Tr_{q^n/q}(\alpha)\ne -h(1)$, the polynomial $$P(x)=L_h(x)+\alpha\cdot \Tr_{q^n/q}(x)+\sum_{i=0}^{q-1}(\theta_i^q-\theta_i)\cdot \Tr_{q^n/q}(x)^i,$$ permutes $\F_{q^n}$. In particular, if $n$ is not divisible by the characteristic $p$ of $\F_q$, one may take $\alpha$ as any element in $\F_q\setminus\{-\frac{h(1)}{n}\}$.

\item if $m>1$, then for every $\alpha, \beta, \theta_0, \ldots, \theta_{q-1}\in \F_{q^n}$ such that $\Tr_{q^n/q}(\alpha)=-h(1)$ and  $\Tr_{q^n/q}(\beta)\ne 0$, the polynomial $$P(x)=L_h(x)+\alpha\cdot \Tr_{q^n/q}(x)+\beta\cdot \Tr_{q^n/q}(x)^m+\sum_{i=0}^{q-1}(\theta_i^q-\theta_i)\cdot \Tr_{q^n/q}(x)^i,$$ permutes $\F_{q^n}$. In particular, if $n$ is not divisible by the characteristic $p$ of $\F_q$, one may take $\alpha=\frac{-h(1)}{n}$ and $\beta$ as any nonzero element of $\F_q$. 

\end{enumerate}
\end{corollary}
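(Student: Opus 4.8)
The plan is to obtain both parts as direct specializations of Theorem~\ref{thm:const}, taking $k(x)=1$ and letting the auxiliary permutation be the monomial $b(x)=ax^m$, where the coefficient $a\in\F_q$ is determined so that the functional equation $T_n[f](x)\equiv b(x)-h(1)\cdot x\pmod{x^q-x}$ holds for the $f$ read off from the displayed $P(x)$. The single arithmetic fact I will lean on is $\Tr_{q^n/q}(\theta^q-\theta)=0$ for every $\theta\in\F_{q^n}$, which makes the entire tail $\sum_{i=0}^{q-1}(\theta_i^q-\theta_i)\Tr_{q^n/q}(x)^i$ contribute nothing after applying $T_n$; for the ``in particular'' clauses I will also use $\Tr_{q^n/q}(\alpha)=n\alpha$ whenever $\alpha\in\F_q$.

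For part (i) I would set $f(y)=\alpha y+\sum_{i=0}^{q-1}(\theta_i^q-\theta_i)y^i$, so that $f(\Tr_{q^n/q}(x))+L_h(x)$ is exactly the displayed $P$. Applying $T_n$ and discarding the tail leaves $T_n[f](x)=\Tr_{q^n/q}(\alpha)\cdot x$. I then put $a:=\Tr_{q^n/q}(\alpha)+h(1)\in\F_q$: the hypothesis $\Tr_{q^n/q}(\alpha)\ne-h(1)$ says precisely $a\ne0$, so $b(x)=ax$ is a permutation of $\F_q$, and by construction $T_n[f](x)=(a-h(1))x=b(x)-h(1)\cdot x$. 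Theorem~\ref{thm:const} then yields that $P$ permutes $\F_{q^n}$. The final sentence follows because, when $p\nmid n$, an element $\alpha\in\F_q$ has $\Tr_{q^n/q}(\alpha)=n\alpha$ with $n\ne0$ in $\F_q$, so the constraint reads $\alpha\ne-h(1)/n$.

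For part (ii) I would instead take $f(y)=\alpha y+\beta y^m+\sum_{i=0}^{q-1}(\theta_i^q-\theta_i)y^i$. Applying $T_n$ and again discarding the tail gives $T_n[f](x)=\Tr_{q^n/q}(\alpha)\cdot x+\Tr_{q^n/q}(\beta)\cdot x^m$. Setting $a:=\Tr_{q^n/q}(\beta)$, the hypotheses $\Tr_{q^n/q}(\alpha)=-h(1)$ and $\Tr_{q^n/q}(\beta)\ne0$ become, respectively, $T_n[f](x)=ax^m-h(1)x=b(x)-h(1)\cdot x$ and $a\ne0$; combined with $\gcd(m,q-1)=1$ this makes $b(x)=ax^m$ a permutation monomial of $\F_q$, and Theorem~\ref{thm:const} finishes the proof. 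The ``in particular'' clause again uses $\Tr_{q^n/q}(\alpha)=n\alpha$ and $\Tr_{q^n/q}(\beta)=n\beta$ on base-field elements.

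The argument is essentially bookkeeping once Theorem~\ref{thm:const} is in hand, so there is no serious obstacle; the one point requiring care is the coefficient matching in part (ii). There I must check that the bounds $1<m<q-1$ keep the exponents $1$ and $m$ distinct and both strictly below $q$, so that reducing modulo $x^q-x$ does not collapse them together or onto the constant term, and reading off $\Tr_{q^n/q}(\alpha)$ and $\Tr_{q^n/q}(\beta)$ as the independent coefficients of $x$ and $x^m$ is legitimate.
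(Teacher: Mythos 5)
Your proposal is correct and follows exactly the route the paper intends: the paper states this corollary as an immediate consequence of Theorem~\ref{thm:const} with $k(x)=1$ and $b(x)=ax^m$, using $\Tr_{q^n/q}(\theta^q-\theta)=0$ to discard the tail, which is precisely your computation. Your coefficient matching (including the check that $1<m<q-1$ keeps the exponents $1$ and $m$ distinct modulo $x^q-x$) and the $\Tr_{q^n/q}(\alpha)=n\alpha$ reduction for the ``in particular'' clauses are all as the authors intended.
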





\subsection{Extension to generic divisors of degree $n-1$.}

The previous result can be easily extended to any other divisor of $x^n-1$ over $\F_q$ with degree $n-1$. If $a\in \F_{q}$ is a root of the polynomial $x^n-1$ (this holds for some $a\ne 1$ if $\gcd(q-1, n)>1$), and $g_a(x)=\frac{x^n-1}{x-a} = x^{n-1} + ax^{n-2} + \cdots + a^{n-1}$, then we can obtain an analogue of Theorem~\ref{thm:trace}.  In this case,  $L_{g_a}(x) = x^{q^{n-1}} + ax^{q^{n-2}} + \cdots + a^{n-1} x$ and $L_{g_a}(\fqn) = \delta \cdot \fq$, where $\delta \in \fqn$ any element satisfying  $\delta^q = a \delta$.  In this analogy,
we replace conditions (1) and (2) there by $\gcd\left(\frac{x^n-1}{x-a}, h(x)\right)=1$ and $Q_a(x)=T_n^{(a)}[f](x)+k(x)\cdot h(a)\cdot x$  permutes the set $\delta \cdot \F_{q}$,  where $\delta\in \F_{q^n}$ is any element with $\delta^q=a\delta$ and for $f(x)=\sum_{i=0}^ma_ix^i\in \F_{q^n}$, 
$$T_n^{(a)}[f](x)=a^{-1}\sum_{i=0}^m\left(\sum_{j=0}^{n-1}a^{(i-1)j}\cdot a_i^{q^j}\right)x^i=a^{-1}\sum_{i=0}^m\left(\sum_{j=0}^{n-1}\delta^{(i-1)(q^j-1)}\cdot a_i^{q^j}\right)x^i.$$ 
We observe that $Q_a(x)$ permutes $\delta \cdot \F_{q}$ if and only if $\delta^{-1}Q_a(\delta x)$ permutes $\F_q$.  By a direct computation, we obtain the following polynomial:
$$ \bar{Q}_a(x) = \delta^{-1}Q_a(\delta x)  = \frac{1}{a}  \sum_{i=0}^m  \Tr(\delta^{i-1} a_i)  x^i + k(\delta x) h(a) x. $$ 

In terms of AGW criterion, we have the following commutative diagram. 

\[
\xymatrixcolsep{12pc}
\xymatrix{
\fqn  \ar[r]^{P(x) = f(L_{g_a}(x) )+k(L_{g_a}(x))\cdot L_h(x)     } \ar[d]_{L_{g_a}(x) } & \fqn \ar[d]^{L_{g_a}(x) }\\
\delta\cdot \fq  \ar[r]^{Q_a(x)=T_n^{(a)}[f](x)+k(x)\cdot h(a)\cdot x}  \ar[d]_{\delta^{-1} x } & \delta\cdot \fq  \ar[d]^{ \delta^{-1}x}\   \\
\fq  \ar[r]^{\bar{Q}_a(x)=\frac{1}{a}  \sum_{i=0}^m  \Tr(\delta^{i-1} a_i)  x^i + k(\delta x) h(a) x} &  \fq 
}
\]

We summarize our result as follows:

\begin{theorem}\label{thm:tracevariant}
Let $a \in \fq$ such that $a^n=1$ and $\delta \in \fqn$ such that $\delta^q = a \delta$. The polynomial $P(x)=f(L_{g_a}(x) )+k(L_{g_a}(x))\cdot L_h(x)\in \F_{q^n}[x]$ with  $L_{g_a}(x) = x^{q^{n-1}} + ax^{q^{n-2}} + \cdots + a^{n-1} x$,  $f(x) = \sum_{i=0}^m a_i x^i$, $h(x)\in \F_q[x]$  and  $k(\delta \F_{q})\subseteq \F_q^*$ is a PP over $\F_{q^n}$ if and only if the following conditions hold:

\begin{enumerate}
\item $\gcd\left(h(x), \frac{x^n-1}{x-a}\right)=1$; 
\item $\bar{Q}_a(x)  = \frac{1}{a}  \sum_{i=0}^m  \Tr(\delta^{j-1} a_j)  x^i + k(\delta x) h(a) x$ is a PP over $\F_q$.
\end{enumerate}
In affirmative case, if $R$ is the inverse PP of $\bar{Q}_a$ over $\F_q$, then the inverse PP of $P$ over $\F_{q^n}$ equals $$P_0(x)=F(\delta^{-1} L_{g_a}(x))+k(\delta R(\delta^{-1} L_{g_a}(x)))^{q-2}\cdot L_H(x),$$ where $H\in \F_q[x]$ and $F\in \F_{q^n}[x]$ are given as follows:
\begin{enumerate}[(i)]
\item if $p\mid n$, then $H\in \F_q[x]$ is the unique polynomial of degree at most $n-1$ such that $h(x)\cdot H(x)\equiv 1\pmod {x^n-1}$ and $F$ is any polynomial satisfying $F(x)\equiv -k(\delta R(x))^{q-2}\cdot L_H(f(\delta R(x)))\pmod {x^{q}-x}$;
\item if $p\nmid n$, then $H\in \F_q[x]$ is the unique polynomial of degree at most $n-2$ such that $h(x)\cdot H(x)\equiv 1\pmod {\frac{x^n-1}{x-a}}$ and $F$ is any polynomial satisfying $F(x)\equiv M(\delta R(x))\pmod {x^q-x}$, where $$M(x)=-k(x)^{q-2}\cdot L_H(f(x))+ax\cdot \frac{1-h(a)\cdot H(a)}{n}.$$
\end{enumerate}
We remark that the polynomial $H$ in case (i) always exists since the condition $\gcd\left(h(x), \frac{x^n-1}{x-a}\right)=1$ is equivalent to $\gcd(h(x), x^n-1)=1$ if $p\mid n$.
 
 \begin{proof}
 In this case, $L_{g_a}(\delta) = \delta^{q^{n-1}} + a \delta^{q^{n-2}}  + \cdots + a^{n-1} \delta = n a^{n-1} \delta=na^{-1}\delta$. For each $b\in \fqn$, we set $e_b = L_{g_a}(b) \in \delta\cdot \fq$. In particular, 
 $b=  a\cdot \frac{e_{b}}{n} + \alpha$ for some $\alpha$ such that $L_g(\alpha) =0$.   The rest of  proof follows as the proof of Theorem~\ref{thm:trace}. 
 \end{proof}
 
\end{theorem}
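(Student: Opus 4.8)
The plan is to treat this as the $a$-twisted analogue of Theorem~\ref{thm:trace}, obtained by replacing the trace divisor $\frac{x^n-1}{x-1}$ by $g_a(x)=\frac{x^n-1}{x-a}$ and then conjugating the induced map by the scaling $x\mapsto\delta^{-1}x$. First I would establish the permutation criterion by invoking Theorem~\ref{agw-linearized} with the divisor $g=g_a$, which is a monic divisor of $x^n-1$ because $a^n=1$. The hypothesis $k(\delta\fq)\subseteq\fq^*$ is exactly the condition $k(L_{g_a}(\fqn))\subseteq\fq^*$, since by Corollary~\ref{cor:roots} the image $L_{g_a}(\fqn)$ is the set of roots of $x^q-ax$, that is, $\delta\cdot\fq$. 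Theorem~\ref{agw-linearized} then yields that $P$ permutes $\fqn$ if and only if $\gcd(g_a,h)=1$, i.e. condition (1), and the induced map $Q(z)=L_{g_a}(f(z))+k(z)\cdot L_h(z)$ permutes $\delta\cdot\fq$.

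The second step is to evaluate $Q$ on $\delta\cdot\fq$ in closed form. The key elementary fact is that every $z\in\delta\cdot\fq$ satisfies $z^{q^i}=a^iz$, obtained by iterating $\delta^q=a\delta$ and using $a\in\fq$. From this, $L_h(z)=\sum_i h_i z^{q^i}=h(a)\cdot z$, and expanding $L_{g_a}(f(z))$ with the same substitution shows it equals $T_n^{(a)}[f](z)$; hence $Q$ restricted to $\delta\cdot\fq$ coincides with $Q_a(x)=T_n^{(a)}[f](x)+k(x)\cdot h(a)\cdot x$. Since $x\mapsto\delta^{-1}x$ is a bijection $\delta\cdot\fq\to\fq$, the map $Q_a$ permutes $\delta\cdot\fq$ if and only if $\bar Q_a(x)=\delta^{-1}Q_a(\delta x)$ permutes $\fq$, which is condition (2); the explicit trace form of $\bar Q_a$ follows from the identity $\delta^{i-1}a^{(i-1)j}a_i^{q^j}=(\delta^{i-1}a_i)^{q^j}$, a consequence of $\delta^{q^j-1}=a^j$, which collapses the inner sum into $\Tr_{q^n/q}(\delta^{i-1}a_i)$.

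For the inverse I would mimic the computation in the proof of Theorem~\ref{thm:trace} verbatim, carrying the extra factors of $a$ and $\delta$. Setting $e_b=L_{g_a}(b)\in\delta\cdot\fq$ and writing $e_b=\delta c_b$, the commutativity of the diagram gives $L_{g_a}(P(b))=Q_a(e_b)$, whence $\delta^{-1}L_{g_a}(P(b))=\bar Q_a(c_b)$ and $R(\delta^{-1}L_{g_a}(P(b)))=c_b$. Expanding $P_0(P(b))$ and using $k(\delta\fq)\subseteq\fq^*$ (so that $k(e_b)^{q-1}=1$) collapses the composite to $F(\bar Q_a(c_b))+k(e_b)^{q-2}L_H(f(e_b))+L_{hH}(b)$, exactly as before. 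In the case $p\mid n$ one takes $H$ with $h\cdot H\equiv 1\pmod{x^n-1}$; then $F(\bar Q_a(c_b))=-k(e_b)^{q-2}L_H(f(e_b))$ cancels the middle term and $L_{hH}(b)=b$, so $P_0(P(b))=b$. In the case $p\nmid n$ one uses $L_{g_a}(\delta)=na^{-1}\delta$ to decompose $b=a\,\tfrac{e_b}{n}+\alpha$ with $\alpha\in\ker L_{g_a}$; since $h\cdot H\equiv 1$ only modulo $g_a$ one gets $L_{hH}(\alpha)=\alpha$ but $L_{hH}\!\left(a\tfrac{e_b}{n}\right)=a\,h(a)H(a)\,\tfrac{e_b}{n}$, and the resulting defect is offset precisely by the correction term $ax\cdot\frac{1-h(a)H(a)}{n}$ in $M$, giving $P_0(P(b))=a\tfrac{e_b}{n}+\alpha=b$.

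The main obstacle will be the bookkeeping in the $p\nmid n$ branch of the inverse: one must verify the decomposition $b=a\,\tfrac{e_b}{n}+\alpha$, which rests on $L_{g_a}(\delta)=na^{-1}\delta$ and the $\fq$-linearity of $L_{g_a}$, correctly evaluate $L_{hH}$ on the two summands, noting that $L_{hH}$ fixes $\ker L_{g_a}$ but distorts the $\delta\cdot\fq$-component by the factor $h(a)H(a)$ rather than by $1$, and confirm that the calibration of the correction term in $M$ cancels this distortion together with the base-point shift. Once the identities $z^{q^i}=a^iz$ and $L_{g_a}(\delta)=na^{-1}\delta$ are in hand, the permutation criterion and the $p\mid n$ case are routine, and the whole argument is a faithful twist of the proof of Theorem~\ref{thm:trace}, recovering it at $a=1$, $\delta=1$.
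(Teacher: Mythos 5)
Your proposal is correct and follows exactly the route the paper intends: the paper's own proof is a two-line sketch that records $L_{g_a}(\delta)=na^{-1}\delta$ and the decomposition $b=a\frac{e_b}{n}+\alpha$ with $\alpha\in\ker L_{g_a}$, then defers to the proof of Theorem~\ref{thm:trace}, and your write-up fills in precisely those details (the identity $z^{q^i}=a^iz$ on $\delta\cdot\F_q$, the conjugation by $\delta^{-1}$, and the cancellation of the $h(a)H(a)$ distortion by the correction term in $M$). No discrepancies.
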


Similarly, we can recursively construct permutation polynomials of $\fqn$ from a permutation polynomial  $b(x)=\sum_{i=0}^{q-1}b_ix^i$ of $\fq$.   Let $k(x)=\sum_{i=0}^{q-1}k_ix^i\in \F_{q^n}[x]$ such that $k(\delta x) \in \fq[x]$ and $k(\delta \fq) \subseteq \fq^*$. 
 
From the functional equation $\bar{Q}_a(x)  \equiv b(x) \pmod {x^q-x}$, we obtain the following system of equations:
\begin{align*}
 \Tr_{q^n/q}(\delta^{-1} y_0) &=a b_0,\\
  \Tr_{q^n/q}(y_1) &= a(b_1 - k_0\cdot h(a)-k_{q-1}\delta^{q-1} \cdot h(a)),  \\
 \Tr_{q^n/q}(\delta^{i-1} y_i) &= a(b_i-k_{i-1} \delta^{i-1} \cdot h(a)), \; 2\le i\le q-1.
\end{align*}
In particular, for fixed $n, b, h$ and $k$, there are $q^{(n-1)q}$ distinct choices for $f$ under the condition $\deg(f)<q$. Moreover, such condition implies that the corresponding polynomials  are all incongruent modulo $x^{q^n}-x$. Hence we obtain $q^{(n-1)q}$ distinct PP's of $\F_{q^n}$.

\section{Conclusion and future research}
In this paper we have explored the permutational property of polynomials of the form $f(L_g(x))+k(L_g(x))\cdot L_h(x)\in \F_{q^n}[x]$ over $\F_{q^n}$, where $L_g, L_h$ are the $q$-associates of polynomials $g, h\in \F_q[x]$ with $g(x)$ a divisor of $x^n-1$ and $k\in \F_q[x]$ satisfies $k( \delta \F_q)\subseteq \F_q^*$. With the help of the AGW criterion, we have provided some general results on PP's of this form, relating to arithmetic properties of the polynomials $g$ and $h$, like the condition $\gcd(g, h)=1$. By specializing to the case when the divisor $g(x)$  is of degree $n-1$, we obtained results on the construction of PP's, CPP's and their inverses. 

It would be interesting to explore the construction of PP's arising from other factors of $x^n-1$. The natural next step is to consider divisors of $x^n-1$ of degree $n-2$ over $\F_q$, where we have to discuss the permutational property of polynomials over $\F_q$-vector spaces of dimension $2$.


\newcommand{\etalchar}[1]{$^{#1}$}

\end{document}